\newtheorem{theorem}{Theorem}[section]
\newtheorem{remark}[theorem]{\it Remark}
\newtheorem{example}[theorem]{Example}
\newtheorem{proposition}[theorem]{Proposition}
\def\C{\mathbb{C}}
\def\R{\mathbb{R}}
\def\tree{\mathcal{T}}
\title{Dissimilarity maps on trees and and the representation theory of $SL_m(\C)$}
\author{Christopher Manon}
\thanks{This work was supported by the NSF fellowship DMS-0902710}
\begin{document}

\begin{abstract}
We prove that $m$-dissimilarity vectors of weighted trees are points
 on the tropical Grassmannian, as conjecture by Cools in response to a question of 
Sturmfels and Pachter.  We accomplish this by relating $m$-dissimilarity vectors
to the representation theory of $SL_m(\C).$
\end{abstract}

\maketitle

\tableofcontents

\smallskip

\section{Introduction}

We will explore tropical properties of weighted, or metric trees
$\tree,$  using the representation theory of the special linear group $SL_m(\C).$
We direct the reader to the book by Fulton and Harris \cite{FH} and the book
by Dolgachev \cite{D} for an introduction
to the representation theory of connected complex reductive groups over $\C$. 
Recall that we can choose a Borel subgroup $B$, and a maximal torus $T$
with $T \subset B \subset G,$ and associate to this data a monoid of weights
$C_G \subset X(T)$ in the characters of $T,$ which classify irreducible representations
of $G$ up to isomorphism. This cone comes with an involution defined by the duality operation on representations $\lambda \to \lambda^*.$  The direct sum of all such representations forms a commutative algebra 

\begin{equation}
R(G) = \bigoplus_{\lambda \in C_G} V(\lambda^*)
\end{equation}

\noindent
which is the coordinate ring of the quotient of $G$ by the unipotent radical of
a chosen Borel subgroup, $R(G) = \C[G/U].$   For $SL_m(\C)$, this can be taken
to be the subgroup of unipotent upper-triangular matrices.  Choosing a Borel subgroup
also fixes a set of positive roots $R_+ \subset X(T),$ for $G,$ which define a partial 
ordering on the weights, we say that $\lambda \geq \lambda'$ if $\lambda - \lambda'$
is a member of $\mathbb{N}R_+.$  For $SL_m(\C),$ the cone $C_{SL_m(\C)}$ is generated over $\mathbb{Z}_+$  by $m-1$ fundamental weights, $\omega_1, \ldots, \omega_{m-1}.$  The weight $\omega_i$ is the so-called highest weight of the representation $\bigwedge^i(\C^m).$  The main result of this paper expresses the $m$-dissimilarity vector of an arbitrary tree in terms of the fundamental weights of $SL_m(\C)$.  In what follows $A_+$ denotes the non-negative members of $A = \mathbb{Z}$ or $\mathbb{R}.$

\subsection{Dissimilarity maps and the Grassmannian}

Let $\tree$ be a trivalent tree with $n$ ordered leaves, and let $\ell: Edge(\tree) \to \mathbb{R}_+$ be a function which assigns a weight (or length) to each edge of $\tree.$  The weight function $\ell$ defines a metric on the leaves $L(\tree)$ of $\tree,$ where the distance $d_{ij}$ between the leaves $i$ and $j$ is the sum of the weights on the edges of the unique path $\gamma(i,j)$ connecting $i$ to $j$ in $\tree.$ We intentionally confuse the path $\gamma_{ij}$ with the set of edges it traverses.

\begin{equation}
d_{ij}(\tree, \ell) = \sum_{e \in \gamma(i,j)} \ell(e)\\
\end{equation}

\noindent
Obviously $d_{ij} = d_{ji}$ and $d_{ii} = 0.$  We call the vector $D_{2, n}(\tree, \ell) = \{d_{ij}(\tree, \ell)\}_{i < j} \in \R^{\binom{n}{2}}$ the $2$-dissimilarity vector of $(\tree, \ell).$  We may generalize this construction by introducing the convex hull of $m$ leaves $i_1 \ldots i_m \in L(\tree)$ as the set of all edges which appear in paths connecting some $i_j$ to some $i_k.$

\begin{equation}
\gamma(i_1 \ldots i_m) = \bigcup  \gamma(i_ji_k)\\
\end{equation}

\noindent
The $m$-dissimilarity vector $D_{m, n}(\tree, \ell) = \{d_{i_1 \ldots i_m}(\tree, \ell)\}_{i_1 < \ldots < i_m} \in \R^{\binom{n}{m}}$ is then defined as expected.

\begin{equation}
d_{i_1 \ldots i_m}(\tree, \ell) = \sum_{e \in \gamma(i_1 \ldots i_m)} \ell(e)\\
\end{equation}

\begin{figure}[htbp]
\centering
\includegraphics[scale = 0.35]{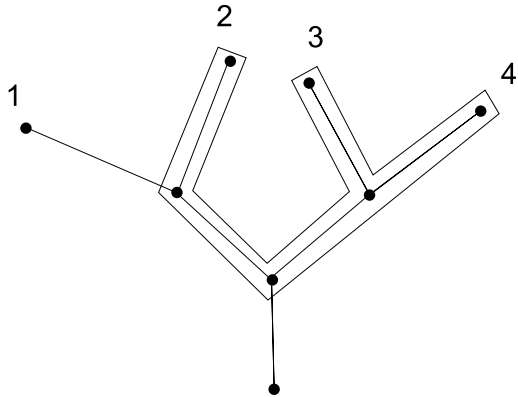}
\caption{The convex hull of three leaves}
\label{fig:1}
\end{figure}

The set of $2$-dissimilarity vectors of weighted trees $T_{2, n} \subset \R^{\binom{n}{2}}$  is well understood.  A weighted tree can be recovered from its $2$-dissimilarity vector, and the set of all $2$-dissimilarity vectors is characterized by the following theorem from tropical geometry, see \cite{SpSt}, \cite{C}.

\begin{theorem}
The set of $2$-dissimilarity vectors coincides with the tropical Grassmanian.

$$T_{2, n} = Trop(Gr_2(\C^n)) \subset \R^{\binom{n}{2}}$$

\end{theorem}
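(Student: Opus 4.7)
The plan is to reduce the theorem to the classical four-point condition for tree metrics, and to observe that this condition is precisely the tropicalization of the three-term Plücker relations that cut out $Gr_2(\C^n)$.

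Recall that the Plücker ideal of $Gr_2(\C^n)$ is generated by the quadratic relations $p_{ij}p_{kl} - p_{ik}p_{jl} + p_{il}p_{jk}$ for $i<j<k<l$. Tropicalizing these, a vector $(d_{ij}) \in \R^{\binom{n}{2}}$ lies in $Trop(Gr_2(\C^n))$ if and only if, for every such quadruple, the extremal value among the three sums $d_{ij}+d_{kl}$, $d_{ik}+d_{jl}$, $d_{il}+d_{jk}$ is attained at least twice. This is exactly the classical four-point condition from phylogenetics, so it suffices to prove that $T_{2,n}$ coincides with the set of vectors obeying the four-point condition.

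For the inclusion $T_{2,n} \subseteq Trop(Gr_2(\C^n))$, I would argue combinatorially. Given four leaves $i,j,k,l$ of $(\tree,\ell)$, the convex hull $\gamma(i,j,k,l)$ is a quartet, possibly with its internal edge collapsed. A direct edge-by-edge accounting shows that the two sums corresponding to the two pairings of leaves lying on opposite sides of the internal edge are equal, and exceed the third sum by exactly twice the length of that internal edge. In the degenerate case where the internal edge is absent, all three sums agree. Either way, the four-point condition is verified.

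For the reverse inclusion, one must reconstruct a weighted tree from any vector satisfying the four-point condition. This is the classical tree-reconstruction theorem of Buneman, Simões-Pereira, and Zaretskii, proved by induction on $n$: the four-point condition forces the existence of a \emph{cherry}, i.e.\ a pair of leaves whose distance data identifies them as siblings at a common internal vertex; one then prunes the cherry, reduces to a smaller system still satisfying the four-point condition, and reattaches with appropriate edge lengths. The main obstacle is this reconstruction step, which is the technical heart of the statement, but it is a well-established result (see \cite{SpSt}, \cite{C}) that I would invoke directly rather than reprove.
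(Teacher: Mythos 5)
The paper offers no proof of this statement: it is quoted as a known theorem of Speyer--Sturmfels and Cools, with the reader sent to \cite{SpSt} and \cite{C}. Your outline follows the standard route from those sources (four-point condition in one direction, Buneman-type tree reconstruction in the other), and the combinatorial verification that a tree metric satisfies the four-point condition --- the two pairings separated by the internal edge of the quartet tie and exceed the third by twice that edge's length --- is correct as stated.

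There is, however, one genuine gap in your reduction. You assert that a vector lies in $Trop(Gr_2(\C^n))$ \emph{if and only if} the maximum among the three sums $d_{ij}+d_{kl}$, $d_{ik}+d_{jl}$, $d_{il}+d_{jk}$ is attained twice for every quadruple. Only the ``only if'' direction is automatic: tropicalizing a generating set of $I_{2,n}$ cuts out a tropical \emph{prevariety}, which in general strictly contains the tropical variety (recall the paper's definition: $w \in Trop(Gr_m(\C^n))$ means $in_w(I_{m,n})$ contains no monomial, a condition on the \emph{entire} ideal, not just on a chosen generating set). Consequently your argument as written only proves $T_{2,n} \subseteq$ (tropical prevariety of the three-term relations), not $T_{2,n} \subseteq Trop(Gr_2(\C^n))$. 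Closing this gap is precisely the nontrivial tropical content of the Speyer--Sturmfels theorem: one must either show that the three-term Pl\"ucker relations form a tropical basis for $I_{2,n}$, or directly exhibit, for each tree metric $w$, a point of the Grassmannian over the Puiseux series field with valuation $w$ (equivalently, check that $in_w(I_{2,n})$ is monomial-free). Your reverse inclusion (reconstruction of a tree from the four-point condition) is fine to cite, but it lands you in $T_{2,n}$ from the prevariety, which again does not by itself identify the prevariety with $Trop(Gr_2(\C^n))$. You should either invoke the tropical basis statement explicitly or supply the lifting argument.
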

\noindent
The Gr\"obner fan of the Pl\"ucker algebra $\C[X_{i_1, \ldots, i_m}] / I_{m, n}$ has support $\R^{\binom{n}{m}},$ so each vector $w$ in this space gives a Gr\"obner degeneration of the ideal $I_{m, n}$ to the ideal of initial forms $in_w(I_{m, n})$.  The set of vectors $w$ which for which $in_w(I_{m, n})$ is monomial free is called the tropical Grassmannian $Trop(Gr_m(\C^n)).$   The above theorem implies that for a vector $w \in \R^{\binom{n}{2}}$ to be a $2$-dissimilarity vector, it must weight Pl\"ucker variables $z_{ij}$ in such a way that at least two monomials in each Pl\"ucker relation 

\begin{equation}
z_{ij}z_{k\ell} - z_{ik}z_{j\ell} + z_{i\ell}z_{jk}\\
\end{equation}
\noindent
have the same weight.  For a point $\vec{w} = \{w_{ij}\}$ to satisfy this requirement,
the maximum of 
$\{w_{ij} + w_{k\ell}, w_{ik} + w_{j\ell}, w_{i\ell} + w_{jk}\}$ must be obtained
at least twice. If this is the case, then we may find a tree $(\tree, \ell)$ such
that $d_{i, j}(\tree, \ell) = w_{ij}.$ Since $2$-dissimilarity vectors characterize
their respective weighted trees, we should expect that some operation on the $2$-dissimilarity
vector  of a weighted tree $(\tree, \ell)$, probably tropical in nature, will yield the
$m$-dissimiliarity vector, and indeed this is the case, see \cite{BC} for the following
theorem. 

\begin{theorem}
Let $\mathcal{C}_m$ be the set of length $m$ cycles in the set of permutations on $m$ letters.
Then we have the following formula. 

\begin{equation}
d_{i_1 \ldots i_m}(\tree, \ell) = \frac{1}{2} min_{\alpha \in C_m} \{d_{i_1 i_{\alpha(1)}} (\tree, \ell) + \ldots +d_{i_{\alpha^{m-1}(1)} i_{\alpha^m(1)}
}(\tree, \ell) \} \\
\end{equation} 

\noindent
This defines an onto map $\phi^{(m)}: T_{2, n} \to T_{m, n}.$
\end{theorem}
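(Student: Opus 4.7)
The plan is to establish the formula edge-by-edge, exploiting the fact that both sides of the identity are linear in $\ell$. First I would prove the inequality
\[
d_{i_1 \ldots i_m}(\tree,\ell) \;\leq\; \tfrac{1}{2}\sum_{k=1}^{m} d_{i_{\alpha^{k-1}(1)}\, i_{\alpha^k(1)}}(\tree,\ell)
\]
for every $m$-cycle $\alpha \in \mathcal{C}_m$, and then exhibit a particular $\alpha^{*}$ achieving equality.

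For the inequality, fix an edge $e$ of $\tree$. Deleting $e$ partitions $L(\tree)$ into two classes $L_A(e)$ and $L_B(e)$, and $e$ contributes $\ell(e)$ to $d_{ij}$ precisely when $i$ and $j$ lie on opposite sides. Given the cyclic sequence $i_1 \to i_{\alpha(1)} \to \cdots \to i_{\alpha^{m-1}(1)} \to i_1$, the total contribution of $e$ to the cyclic sum equals $\ell(e)$ times the number of consecutive pairs in this sequence that straddle $e$. That count is the number of sign changes in a closed $\pm 1$-sequence, hence even; it is $0$ when $e \notin \gamma(i_1 \ldots i_m)$, and at least $2$ when $e \in \gamma(i_1 \ldots i_m)$, since in that case both $L_A(e)$ and $L_B(e)$ meet $\{i_1, \ldots, i_m\}$. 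Summing $\ell(e)$ times these counts over all edges $e$ yields the desired bound.

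For equality, I would choose a planar embedding of the convex hull $\gamma(i_1 \ldots i_m)$ and let $\alpha^{*} \in \mathcal{C}_m$ be the $m$-cycle that sends each $i_j$ to the next element of $\{i_1, \ldots, i_m\}$ encountered while traversing the boundary of the embedded subtree. Concatenating the geodesics between cyclically consecutive leaves produces a closed walk which is the doubled Euler tour of $\gamma(i_1 \ldots i_m)$: for any edge $e$ in the subtree, the leaves on each side of $e$ form a contiguous arc in the planar cyclic order, so exactly two consecutive pairs in the $\alpha^{*}$-cycle straddle $e$. Hence every edge of $\gamma(i_1 \ldots i_m)$ is used exactly twice and every other edge zero times, so $\alpha^{*}$ realizes the minimum and the formula holds.

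Finally, that $\phi^{(m)}$ is a genuine function on $T_{2,n}$ follows from the fact, cited just above the statement, that a weighted tree is reconstructible from its $2$-dissimilarity vector; surjectivity onto $T_{m,n}$ is automatic, since every $m$-dissimilarity vector comes from some $(\tree,\ell)$ whose $2$-dissimilarity vector maps to it under the formula. The principal obstacle is the construction of the optimal cycle $\alpha^{*}$, but this is essentially a standard Euler-tour argument on trees; the parity/arc-contiguity bookkeeping is the only delicate step.
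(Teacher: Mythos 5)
Your argument is correct, but note that the paper does not actually prove this statement: it is quoted verbatim from Bocci--Cools \cite{BC}, so there is no internal proof to compare against. Your double-counting argument is sound and self-contained. The key points all check out: for any edge $e$, its multiplicity in the cyclic sum $\sum_k d_{i_{\alpha^{k-1}(1)} i_{\alpha^k(1)}}$ is the number of cyclically consecutive pairs separated by $e$, which is even, vanishes exactly when $e \notin \gamma(i_1 \ldots i_m)$, and is at least $2$ otherwise; and for the planar (Euler-tour) cyclic order the leaves on either side of any hull edge form complementary contiguous arcs, so the multiplicity is exactly $2$ on the hull. One small point worth making explicit for the equality step: the geodesic between cyclically consecutive leaves of the planar order really is the corresponding segment of the boundary walk, because every leaf of the convex-hull subtree is one of the $i_j$ (a leaf of $\tree$ has degree one in any subtree containing it, and a leaf of the hull not among the $i_j$ could be pruned), so the boundary walk cannot backtrack into a dead branch between consecutive chosen leaves. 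Your remarks on $\phi^{(m)}$ are fine, though reconstructibility of the tree is not needed for well-definedness: the displayed formula is a map defined on all of $\R^{\binom{n}{2}}$, and the identity itself shows it carries $D_{2,n}(\tree,\ell)$ to $D_{m,n}(\tree,\ell)$, which gives both that $T_{2,n}$ lands in $T_{m,n}$ and surjectivity.
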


Given that $T_{m, n}$ and the tropical Grassmannian $Trop(Gr_m(\C^n))$ live
in the same space, and coincide for $m = 2,$ one would hope that these two
sets always have a close relationship. Sturmfels and Pachter asked
if the set of $m$-dissimilarity vectors was always contained in the
tropical Grassmannian $Trop(Gr_m(\C^n,)$ \cite{PSt}. Cools recently proved 
this for small $m,$ \cite{C} and conjectured that the result holds for all $m,$ the
result was proved in general by Giraldo, \cite{G}.

\begin{theorem}[Cools, Giraldo]\label{main}
\begin{equation}
\phi^{(m)}(T_{2, n}) = T_{m, n} \subset Trop(Gr_m(\C^n))\\ 
\end{equation}
\end{theorem}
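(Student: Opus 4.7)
The equality $\phi^{(m)}(T_{2,n}) = T_{m,n}$ is the content of Theorem 1.3, so the substantive claim is the containment $T_{m,n} \subset Trop(Gr_m(\C^n))$. Fix a trivalent weighted tree $(\tree, \ell)$ with leaves $\{1,\dots,n\}$ and let $w = D_{m,n}(\tree,\ell)$; we must show $in_w(I_{m,n})$ contains no monomial. The plan is to exhibit $w$ as the weight valuation induced by a flat degeneration of the Pl\"ucker algebra $\C[Gr_m(\C^n)]$ built from the branching structure of $SL_m(\C)$ along $\tree$, and then to deduce monomial-freeness from the fact that the degenerate algebra is a domain.

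First I would realize $\C[Gr_m(\C^n)]$ as the $SL_m(\C)$-invariant subring of $\mathrm{Sym}(\C^m \otimes \C^n)$, so that its $T^n$-multigraded pieces decompose as direct sums of invariant spaces $[V(\lambda_1) \otimes \cdots \otimes V(\lambda_n)]^{SL_m(\C)}$ indexed by tuples $(\lambda_1, \ldots, \lambda_n)$ of dominant weights, with the Pl\"ucker coordinate $p_{i_1 \ldots i_m}$ lying in the one-dimensional summand where $\lambda_{i_j} = \omega_1$ and $\lambda_k = 0$ otherwise. Next, to $\tree$ I would attach a \emph{tree algebra} $A_\tree$: each interior edge $e$ is labelled by a dominant weight $\mu_e$, each interior vertex $v$ contributes the invariant space $[V(\mu_{e_1(v)}) \otimes V(\mu_{e_2(v)}) \otimes V(\mu_{e_3(v)})]^{SL_m(\C)}$, and $A_\tree$ is the direct sum over all consistent edge labellings, with multiplication given by Cartan multiplication on $R(SL_m(\C))$ at each vertex. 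Standard branching-algebra arguments should show that $A_\tree$ is an integral domain and that it arises as a flat degeneration of $\C[Gr_m(\C^n)]$.

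I would then define a filtration $\mathcal{F}_\ell$ by assigning to an edge-labelling $(\mu_e)$ the weight $\sum_e \ell(e)|\mu_e|$, where $|\mu_e|$ denotes the level of $\mu_e$ in the fundamental-weight basis. A local computation at each edge shows that in the summand for $p_{i_1 \ldots i_m}$ an edge $e$ is forced to carry the trivial weight if it fails to separate $\{i_1, \ldots, i_m\}$ nontrivially, and a level-one fundamental weight otherwise; hence the weight of $p_{i_1 \ldots i_m}$ equals $\sum_{e \in \gamma(i_1 \ldots i_m)} \ell(e) = d_{i_1 \ldots i_m}(\tree,\ell) = w_{i_1 \ldots i_m}$. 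Transferring $\mathcal{F}_\ell$ back to $\C[Gr_m(\C^n)]$ along the degeneration, the associated graded is a quotient (or a refinement) of the domain $A_\tree$, so $in_w(I_{m,n})$ is a prime ideal and is therefore monomial-free, placing $w$ in $Trop(Gr_m(\C^n))$.

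The principal obstacle is establishing the flat degeneration of $\C[Gr_m(\C^n)]$ to $A_\tree$ in a form sufficiently compatible with $\mathcal{F}_\ell$ to guarantee that each Pl\"ucker coordinate has a nonzero initial form in the correct isotypic summand. This is precisely where the representation theory of $SL_m(\C)$ enters essentially, via the Cartan-multiplication structure of $R(SL_m(\C)) = \C[SL_m(\C)/U]$ and the branching rules at trivalent vertices. Once the degeneration is set up, the identification of the weight with the dissimilarity vector reduces to the combinatorial observation that an edge of $\tree$ lies in the convex hull $\gamma(i_1 \ldots i_m)$ if and only if it separates the index set nontrivially.
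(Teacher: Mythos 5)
Your setup is essentially the paper's: realize the Pl\"ucker algebra as $\C[M_{m\times n}(\C)]^{SL_m(\C)}$ sitting inside the branching algebra of the diagonal map $\Delta_n: SL_m(\C)\to SL_m(\C)^n$, decompose it along a (rooted) version of $\tree$ into summands indexed by edge-labellings by dominant weights, and check that the functional sending every fundamental weight to $1$, scaled by the edge lengths, assigns to $Z_{i_1\ldots i_m}$ exactly $d_{i_1\ldots i_m}(\tree,\ell)$, because an edge carries a nontrivial label (a single fundamental weight $\omega_{n_e}$) precisely when it lies in the combinatorial convex hull of the chosen leaves. All of that matches the paper.

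The gap is in your last step, where you deduce monomial-freeness of $in_w(I_{m,n})$ from the claim that the associated graded of $\mathcal{F}_\ell$ is (a quotient or refinement of) the domain $A_\tree$, hence that $in_w(I_{m,n})$ is prime. First, a quotient of a domain need not be a domain, so even granting your degeneration the conclusion does not follow as stated. Second, and more seriously, the functional you actually need --- $h(\omega_k)=1$ for all $k$, scaled by $\ell(e)$ --- vanishes on every positive root of $SL_m(\C)$ (e.g.\ $h(\alpha_i)=h(2\omega_i-\omega_{i-1}-\omega_{i+1})=0$), whereas the flat degeneration of the branching algebra to the tree algebra requires strict positivity on positive roots. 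So the $\R$-graded associated graded of $\mathcal{F}_\ell$ is not $A_\tree$: many incomparable edge-labellings collapse to the same $\mathcal{F}_\ell$-level, and the resulting initial ideal need not be prime. (Indeed the theorem only asserts membership in the tropical Grassmannian, not that $in_w(I_{m,n})$ is prime, and primeness fails for general points of $Trop(Gr_m(\C^n))$.) The paper avoids this entirely: Proposition \ref{highest} shows that with respect to the \emph{partially ordered} multifiltration by weight-labellings, a product has a nonvanishing leading term and all other terms are strictly smaller in dominance order; Proposition \ref{pres} then argues directly that in any relation $F\in I_{m,n}$ the leading pure term of a monomial of maximal $\vec{h}$-weight must cancel against a pure term of another monomial, which forces that second monomial to have the same $\vec{h}$-weight. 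That two-term cancellation argument needs only non-negativity of $\vec{h}$ on positive roots and gives monomial-freeness without any primeness claim. Your argument could be repaired either by substituting this cancellation step, or by perturbing to a functional strictly positive on positive roots and invoking closedness of the tropical variety --- but the latter still requires justifying the flat degeneration to a domain, which is a substantially heavier input than what the paper uses.
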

\noindent
This means that the entries of $D_{m, n}(\tree, \ell)$ always satisfy
the tropical Pl\"ucker equations, and the weighting defined by this vector defines a monomial free initial ideal $in_{D_{m,n}(\tree, \ell)}(I_{m,n}).$ The purpose of this note is to prove this theorem using tropical properties of the Pl\"ucker algebra deduced from the related representation theory of $SL_m(\C)$.

\subsection{Invariants in tensor products of representations.}

The Pl\"ucker algebra $\C[X_{i_1, \ldots, i_m}]/ I_{m, n}$ is a natural
object in the representation theory of $SL_m(\C),$ it appears as the subring of invariants of the diagonal action of $SL_m(\C)$ on $M_{m \times n}(\C),$ this is the First Fundamental Theorem of Invariant Theory.

\begin{equation}
\C[X_{i_1, \ldots, i_m}]/ I_{m, n} \cong \C[M_{m\times n}(\C)]^{SL_m(\C)}\\
\end{equation}
\noindent
The Pl\"ucker algebra is exactly the subring generated by the Pl\"ucker coordinates,
$Z_{i_1 \ldots i_m}$.  Let $A \in M_{m\times n}(\C),$ $A = [C_1 \ldots C_n],$ with $C_i \in \C^m,$ then the value of the Pl\"ucker coordinates at $A$ is defined by the following. 

\begin{equation}
Z_{i_1 \ldots i_m}(A) = det[C_{i_1} \ldots C_{i_m}]\\
\end{equation}
\noindent
We may rewrite this algebra in terms of the category of finite dimensional representations
of $SL_m(\C)$ as follows.

\begin{equation}
\C[M_{m \times n}(\C)]^{SL_m(\C)} = \bigoplus_{\vec{r} \in \mathbb{Z}_+^n} [V(r_1\omega_1^*) \otimes \ldots \otimes V(r_n \omega_1^*)]^{SL_m(\C)}\\
\end{equation}
\noindent
Here $\omega_1$ is the highest weight of $\C^m$ as a representation of $SL_m(\C)$ and $\omega_1^* = \omega_{m-1}$.  With respect to this direct-sum decomposition, the Pl\"ucker coordinate $Z_{i_1 \ldots i_m}$ is a generator of the summand with $V(\omega_1^*)$ in the $r_{i_j}-$th place for all $i_j \in \{i_1, \ldots, i_m\},$ and the trivial representation everywhere else.  Multiplication in $\C[M_{m, n}(\C)]$ has a nice description in terms of this direct sum decomposition as well, it is induced by the Cartan multiplication maps in each component, where the tensor product is projected onto its highest weight summand.

\begin{equation}
V(r_1\omega_1^*) \otimes V(r_2 \omega_1^*) \to V((r_1 + r_2) \omega_1^*)  \\
\end{equation}

We may rewrite each summand in terms of homomorphisms from the category
of $SL_m(\C)$ representations.
\begin{equation}
[V(r_1\omega_1^*) \otimes \ldots \otimes V(r_n\omega_1^*)]^{SL_m(\C)} = Hom_{SL_m(\C)}(\C, V(r_1\omega_1^*) \otimes \ldots \otimes V(r_n\omega_1^*))\\
\end{equation}
\noindent
In this way, the Pl\"ucker algebra encodes the branching problem of finding copies of the trivial representation of $SL_m(\C)$ in an irreducible representation of $SL_m(\C)^n.$    
For this reason, we will refer to the Pl\"ucker algebra as a  branching algebra.  In general, a branching algebra encodes the branching rules of irreducible representations for some map of reductive groups. In this case the map is the diagonal map $\Delta_n: SL_m(\C) \to SL_m(\C)^n.$  Filtrations and associated graded algebras of branching algebras like this one were studied by the author in \cite{M}, in particular for diagonal embeddings as above, the author described a way to produce filtrations of the branching algebra associated to labelled, rooted trees.  We will review the details of this construction in the next section, but for now we will describe the features that we need.  Let $C_{SL_m(\C)}$ be the cone of dominant weights with respect to the standard ordering of weight vectors in the weight lattice
for $SL_m(\C).$  Let $\hat{\tree}$ be a rooted tree with $n$ leaves, 
we consider the orientation induced on the edges of $\hat{\tree}$ by orienting every edge in the unique path from the root to a leaf in such a way to make the root the unique source. 

\begin{proposition}\label{decomp}
Let $\hat{\tree}$ be a rooted tree with $n$ leaves.  There
is a direct-sum decomposition,

\begin{equation}
[V(r_1 \omega_1) \otimes \ldots \otimes V(r_n \omega_1)]^{SL_m(\C)} = \bigoplus \mathcal{W}(\hat{\tree}, \lambda)\\
\end{equation}
\noindent
over all $\lambda: Edge(\hat{\tree}) \to C_{SL_m(\C)},$ such that the root edge is weighted $0,$  the edge incident to the $i$'th leaf is weighted $r_i \omega_1^*,$ and for each internal vertex, the representation associated to the label on the sink appears in the direct sum decomposition of the tensor product of the representations associated to the labels on the sources.  The summand $\mathcal{W}(\hat{\tree}, \lambda)$ is the vector space of all possible assignments of intertwiners to the internal vertices which realize the weight on a source at a vertex as a summand of the tensor product of the weights on sinks.
\end{proposition}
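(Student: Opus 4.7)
The plan is to induct on the number of internal vertices of $\hat{\tree}$, after first generalizing the statement to allow arbitrary boundary data: for any rooted tree with root-edge weight $\lambda_0 \in C_{SL_m(\C)}$ and leaf-edge weights $\lambda_1,\ldots,\lambda_n$, I will show
\[ \t{Hom}_{SL_m(\C)}\bigl(V(\lambda_0),\, V(\lambda_1) \otimes \cdots \otimes V(\lambda_n)\bigr) \;=\; \bigoplus_{\lambda} \mathcal{W}(\hat{\tree}, \lambda), \]
with the sum ranging over admissible labelings extending the given boundary. The proposition is the specialization $\lambda_0 = 0$, $\lambda_i = r_i \omega_1^*$.

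\textbf{Base case.} If $\hat{\tree}$ has a single internal vertex $v$, then $\hat{\tree}$ is a star; there are no internal edges whose labels are free, the admissibility condition at $v$ reads precisely that $V(\lambda_0)$ occurs as a summand of $\bigotimes_i V(\lambda_i)$, and by definition $\mathcal{W}(\hat{\tree}, \lambda)$ is the corresponding multiplicity space. This matches the left-hand side tautologically.

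\textbf{Inductive step.} Pick a deepest internal vertex $v$, i.e.\ one all of whose children are leaves; denote those children $\ell_{i_1},\ldots,\ell_{i_k}$ with weights $\lambda_{i_1},\ldots,\lambda_{i_k}$, and let $e_v$ be the edge connecting $v$ to its parent. Apply the isotypic decomposition
\[ V(\lambda_{i_1}) \otimes \cdots \otimes V(\lambda_{i_k}) \;=\; \bigoplus_{\mu \in C_{SL_m(\C)}} V(\mu) \otimes M_\mu, \qquad M_\mu := \t{Hom}_{SL_m(\C)}\bigl(V(\mu),\, V(\lambda_{i_1}) \otimes \cdots \otimes V(\lambda_{i_k})\bigr), \]
noting that $M_\mu \neq 0$ exactly when $\mu$ is admissible at $v$. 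Substituting this into the full tensor product and applying $\t{Hom}_{SL_m(\C)}(V(\lambda_0), \,\cdot\,)$, one obtains
\[ \bigoplus_\mu \t{Hom}_{SL_m(\C)}\!\left(V(\lambda_0),\; V(\mu) \otimes \bigotimes_{j \notin \{i_1,\ldots,i_k\}} V(\lambda_j)\right) \otimes M_\mu. \]
Let $\hat{\tree}'$ be the rooted tree obtained by pruning $\ell_{i_1},\ldots,\ell_{i_k}$, so $v$ becomes a new leaf carrying weight $\mu$. The inner $\t{Hom}$ is then the generalized left-hand side for $\hat{\tree}'$, and by induction it decomposes as $\bigoplus_{\lambda'} \mathcal{W}(\hat{\tree}', \lambda')$. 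Admissible labelings $\lambda$ of $\hat{\tree}$ are in natural bijection with pairs $(\mu, \lambda')$ via $\mu = \lambda(e_v)$, and one has the canonical identification $\mathcal{W}(\hat{\tree}, \lambda) \cong \mathcal{W}(\hat{\tree}', \lambda') \otimes M_\mu$ obtained by extracting the intertwiner at $v$ from those at the remaining internal vertices.

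\textbf{The main obstacle} is not a single hard step but rather the bookkeeping: one must verify that the identification $\mathcal{W}(\hat{\tree}, \lambda) \cong \mathcal{W}(\hat{\tree}', \lambda') \otimes M_\mu$ is canonical and that the direct sums match term by term, i.e.\ that specifying an intertwiner at every internal vertex of $\hat{\tree}$ is genuinely the same data as choosing $\mu$, an element of $M_\mu$, and an intertwiner assignment on the smaller tree $\hat{\tree}'$. This reduces to the functorial associativity of tensor product decomposition in the semisimple category $\t{Rep}(SL_m(\C))$, which is standard but must be carefully aligned with the combinatorics of how vertices are pruned.
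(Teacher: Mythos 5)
Your argument is correct and is essentially the paper's own: the paper dispatches Proposition \ref{decomp} by observing that $\hat{\tree}$ is a parenthesization of the tensor product which one recursively expands into isotypic pieces using semisimplicity, and your induction on internal vertices (pruning a deepest vertex and splitting off the multiplicity space $M_\mu$) is precisely that recursion written out carefully. The generalization to arbitrary boundary weights $\lambda_0,\lambda_1,\ldots,\lambda_n$ is the right way to make the induction close, and the identification $\mathcal{W}(\hat{\tree},\lambda)\cong\mathcal{W}(\hat{\tree}',\lambda')\otimes M_\mu$ is immediate from the definition of $\mathcal{W}$ as an assignment of intertwiners to internal vertices.
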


\noindent
 In figure \ref{fig:2} we an example of such an object with $SL_m(\C)$ representations given by Young tableaux.  Recall that $\omega_1^* = \omega_{m-1}.$

\begin{figure}[htbp]
\centering
\includegraphics[scale = 0.3]{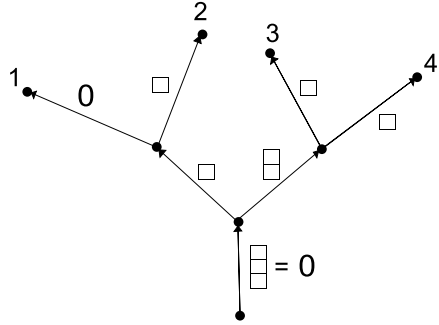}
\caption{A tree weighted with representations}
\label{fig:2}
\end{figure}

Proposition \ref{decomp} is a formal consequence of properties of semisimple categories with
monoidal products.  The tree $\hat{\tree}$ can be considered as a recipe for inserting
parentheses into the tensor product $V(r_1\omega_1^*) \otimes \ldots \otimes V(r_n\omega_1^*),$
and gives a way to recursively expand the expression into a direct sum. We can then take the same tree $\hat{\tree}$ and assign to its edges $e \in Edge(\tree)$ functionals $h_e: C_{SL_m(\C)} \subset X(T_{SL_m(\C)}) \to \mathbb{R}_+,$  
taking care that $h_e$ is positive on all positive roots.  We apply this functional $(\hat{\tree}, \hat{h})$ to each summand,  where an element in $\mathcal{W}(\hat{\tree}, \lambda)$
is given the weight $\sum_{e \in Edge(\hat{\tree})} h_e(\lambda(e)).$  This weights each Pl\"ucker coordinate $Z_{i_1 \ldots i_m}$ with a number dependent on $\hat{h}$ and the tree $\hat{\tree},$ and so gives a point in $\R^{\binom{n}{m}}.$  After reviewing the construction of this filtration and understanding it with respect to the multiplication operation in the Pl\"ucker algebra, we will be able to conclude the following.

\begin{theorem}
Each $(\hat{\tree}, \hat{h})$ defines a point in $Trop(Gr_m(\C^n)) \subset \R^{\binom{n}{m}}$
\end{theorem}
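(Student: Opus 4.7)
The plan is to realize $(\hat{\tree},\hat{h})$ as the leading-exponent data of an algebra filtration on the Pl\"ucker algebra $R=\C[M_{m\times n}(\C)]^{SL_m(\C)}$ whose associated graded is an integral domain. Since a weighting $w$ on Pl\"ucker coordinates arises from an algebra filtration with domain associated graded exactly when $in_w(I_{m,n})$ is prime, and a prime initial ideal is automatically monomial-free, this places $w=w(\hat{\tree},\hat{h})$ in $Trop(Gr_m(\C^n))$.

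Concretely, using Proposition \ref{decomp}, for each $a\in\R$ I would let $F_{\leq a}\subset R$ denote the span of those summands $\mathcal{W}(\hat{\tree},\lambda)$ with $w(\lambda):=\sum_{e\in Edge(\hat{\tree})}h_e(\lambda(e))\leq a$. The key technical point is to verify that $F$ is an algebra filtration, $F_{\leq a}\cdot F_{\leq b}\subset F_{\leq a+b}$. Multiplication in $R$ is induced by Cartan multiplication in each tensor factor, and the Cartan decomposition $V(\mu)\otimes V(\nu)\cong V(\mu+\nu)\oplus\bigoplus_{\nu'} V(\nu')^{\oplus m_{\nu'}}$ has every non-leading summand satisfy $(\mu+\nu)-\nu'\in\N R_+\setminus\{0\}$. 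Propagating this decomposition through each internal vertex of $\hat{\tree}$, the product of elements in $\mathcal{W}(\hat{\tree},\lambda)$ and $\mathcal{W}(\hat{\tree},\lambda')$ decomposes into pieces in summands $\mathcal{W}(\hat{\tree},\lambda'')$ with $\lambda(e)+\lambda'(e)-\lambda''(e)\in\N R_+$ on every edge. Positivity of each $h_e$ on $R_+$ then forces $h_e(\lambda''(e))\leq h_e(\lambda(e))+h_e(\lambda'(e))$, and summing over edges yields the required inequality.

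With the filtration property in hand, I would identify $\gr_F R$ following the analysis of \cite{M}: on the associated graded, multiplication collapses to the Cartan summand at every internal vertex, and $\gr_F R$ is naturally isomorphic to the affine semigroup algebra of the cone of admissible edge-labelings of $\hat{\tree}$ by dominant weights, hence a normal domain. Since each Pl\"ucker coordinate $Z_{i_1\ldots i_m}$ is homogeneous with respect to $F$ with weight prescribed by $(\hat{\tree},\hat{h})$, the flat degeneration $R\rightsquigarrow\gr_F R$ realizes precisely the Gr\"obner degeneration $I_{m,n}\rightsquigarrow in_w(I_{m,n})$, and primality of the latter finishes the argument.

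The principal obstacle is the algebra-filtration property, whose verification requires simultaneously tracking how Cartan multiplication interacts with every internal vertex of $\hat{\tree}$. Both the hypothesis that each $h_e$ is positive on $R_+$ and the oriented structure of the rooted tree --- which assigns a distinguished sink to every internal vertex and thereby a direction to the Cartan multiplication --- are essential. Identifying $\gr_F R$ explicitly as a normal semigroup algebra is a secondary technical hurdle, but it is already the content of the tree-based branching-algebra filtration constructed in \cite{M}.
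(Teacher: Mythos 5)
Your overall strategy --- build a filtration of the Pl\"ucker algebra from the tree decomposition, use the Cartan component as the leading term, and conclude something about $in_w(I_{m,n})$ --- is the same skeleton as the paper's, and your verification of the filtration property $F_{\leq a}\cdot F_{\leq b}\subset F_{\leq a+b}$ is exactly the content of the paper's Proposition \ref{highest}. But you then aim for a strictly stronger conclusion (primality of $in_w(I_{m,n})$ via a flat degeneration to a domain), whereas the paper's Proposition \ref{pres} proves monomial-freeness directly: if some relation had a single monomial of maximal weight, its nonvanishing Cartan leading term could not be cancelled by the strictly lower pure terms of the other monomials. Your stronger route has two genuine gaps.

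First, the identification of $\gr_F R$ as ``the affine semigroup algebra of the cone of admissible edge-labelings'' is false for $m\geq 3$: the summand $\mathcal{W}(\hat{\tree},\lambda)$ has dimension equal to a product of Littlewood--Richardson coefficients, which exceed $1$ in general, so the associated graded is not toric. (It is instead a torus-invariant subalgebra of a tensor product of local branching algebras, as in the paper's remark on $[\mathfrak{A}(\psi)\otimes\mathfrak{A}(\phi)]^{T_K}$; that algebra is still a domain, so this error is repairable, but the normal-semigroup description only holds for $m=2$, where branching is multiplicity-free.) Second, and more seriously, the claim that the degeneration $R\to\gr_F R$ ``realizes precisely the Gr\"obner degeneration $I_{m,n}\to in_w(I_{m,n})$'' is exactly the assertion that the Pl\"ucker coordinates form a Khovanskii-type basis for the intrinsic filtration $F$, i.e.\ that their initial forms generate $\gr_F R$ and that the filtration induced from the weighted polynomial ring coincides with $F$ on associated gradeds. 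The standard isomorphism $\C[X]/in_w(I)\cong\gr_{F'}R$ holds only for the \emph{induced} filtration $F'$, which satisfies $F'_{\leq a}\subseteq F_{\leq a}$ but need not equal $F$; a priori there could be unexpected cancellations placing an element of $F'_{\leq a}\setminus F'_{<a}$ inside $F_{<a}$, and ruling those out is essentially the theorem itself. So as written the argument is incomplete at its crux. Note that the weaker statement you actually need does follow cheaply from your setup: since $\Phi(x^{\vec a})$ has a nonvanishing $F$-leading term at level $w(\vec a)$ and $F'_{<w(\vec a)}\subseteq F_{<w(\vec a)}$, no monomial maps to zero in $\gr_{F'}R\cong\C[X]/in_w(I_{m,n})$, hence $in_w(I_{m,n})$ contains no monomial --- which is the paper's argument in slightly different clothing, and avoids both primality and the Khovanskii-basis issue.
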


This will follow from general arguments on filtrations of branching algebras obtained
from the associated representation theory, in particular we will give a general way to produce points on the tropical varieties of ideals defining these algebras.  The functionals $(\hat{\tree}, \hat{h})$ have a good amount of flexibility, enough to show the following theorem.

\begin{theorem}
There exists for any weighted tree $(\tree, \ell)$ a tree functional $(\hat{\tree}, \hat{h})$
such that $d_{i_1\ldots i_m}(\tree, \ell) = (\hat{\tree}, \hat{h})(Z_{i_1 \ldots i_m})$ for 
all $m$ tuples $\{i_1, \ldots, i_m\}.$  In particular, $m$-dissimilarity vectors are points on the tropical Grassmannian. 
\end{theorem}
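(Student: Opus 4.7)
The plan is to exhibit, for each weighted tree $(\tree,\ell)$, an explicit tree functional $(\hat{\tree},\hat{h})$ whose image on each Pl\"ucker coordinate recovers the corresponding $m$-dissimilarity, after which the preceding theorem forces $D_{m,n}(\tree,\ell)$ into $Trop(Gr_m(\C^n))$.  First I would root the trivalent tree $\tree$ by subdividing an interior edge and adjoining a length-zero root edge at the subdivision point; this produces a rooted tree $\hat{\tree}$ whose non-root edges are in length-preserving bijection with those of $\tree$, and in which every internal vertex has exactly one incoming and two outgoing edges.  For each edge $e$ of $\hat{\tree}$, I would take $h_e\colon C_{SL_m(\C)} \to \R$ to be the linear extension of $h_e(\omega_j) = \ell(e)$ for $1 \le j \le m-1$; a direct check on the simple roots $\alpha_i = 2\omega_i - \omega_{i-1} - \omega_{i+1}$ (with $\omega_0 = \omega_m = 0$) shows $h_e$ is non-negative on every positive root.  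If the preceding theorem requires strict positivity, I would add a small generic perturbation and pass to the limit using closedness of $Trop(Gr_m(\C^n))$.

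Fix $m$ leaves $i_1 < \cdots < i_m$ and write $k_e$ for the number of chosen leaves lying in the subtree of $\hat{\tree}$ below $e$.  The Pl\"ucker coordinate $Z_{i_1 \ldots i_m}$ spans the one-dimensional invariant space corresponding to $r_{i_j} = 1$ at the chosen leaves and $r_i = 0$ elsewhere, so Proposition \ref{decomp} produces a unique labeling $\lambda^{*}$ with $\mathcal{W}(\hat{\tree},\lambda^{*}) \ne 0$.  I claim $\lambda^{*}(e) = \omega_{m - k_e}$.  Verifying compatibility at an internal vertex whose two child subtrees contain $k_1$ and $k_2$ chosen leaves reduces to the multiplicity-one assertion
\[
V(\omega_{m - k_1 - k_2}) \subset V(\omega_{m - k_1}) \otimes V(\omega_{m - k_2});
\]
dualizing via $V(\omega_a)^{*} = V(\omega_{m-a})$ turns this into the classical Pieri statement that $V(\omega_{k_1 + k_2})$ occurs once in $V(\omega_{k_1}) \otimes V(\omega_{k_2})$, witnessed by the nonzero wedge map $\bigwedge^{k_1}\C^m \otimes \bigwedge^{k_2}\C^m \to \bigwedge^{k_1 + k_2}\C^m$.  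Uniqueness of $\lambda^{*}$ is then forced by the fact that the ambient invariant space is one-dimensional and all multiplicities along the tree for $\lambda^{*}$ are positive.

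Summing along the tree one obtains
\[
(\hat{\tree},\hat{h})(Z_{i_1 \ldots i_m}) \;=\; \sum_{e} h_e(\lambda^{*}(e)) \;=\; \sum_{e \,:\, 0 < k_e < m} \ell(e) \;=\; d_{i_1 \ldots i_m}(\tree,\ell),
\]
because $h_e(\omega_{m-k_e}) = \ell(e)$ when $0 < k_e < m$ and vanishes when $k_e \in \{0,m\}$, and an edge $e$ lies in the convex hull $\gamma(i_1,\ldots,i_m)$ exactly when $0 < k_e < m$.  The preceding theorem then places $(\hat{\tree},\hat{h})$ in $Trop(Gr_m(\C^n))$, and the display identifies this point with $D_{m,n}(\tree,\ell)$.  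The main obstacle is the identification of $\lambda^{*}$ in Step two: one must justify that the multiplicity-one Pieri chain really produces the unique labeling that supports $Z_{i_1 \ldots i_m}$, ruling out phantom compatible labelings by careful bookkeeping of the direct-sum decomposition in Proposition \ref{decomp}.  A secondary technicality is the weak-versus-strict positivity of $h_e$ on interior simple roots, handled by the perturbation argument above or by checking that the preceding theorem tolerates this mild degeneration.
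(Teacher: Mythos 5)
Your proposal is correct and follows essentially the same route as the paper: root the tree by subdividing an edge, take the functional $h_e = \ell(e)H$ with $H(\omega_j)=1$ on all fundamental weights, identify the unique labeling supporting $Z_{i_1\ldots i_m}$ as $\lambda^*(e)=\omega_{m-k_e}$, and observe that the nontrivially labeled edges are exactly the convex hull of the chosen leaves. You are in fact somewhat more explicit than the paper on two points it leaves implicit --- the multiplicity-one/dimension-count argument forcing uniqueness of $\lambda^*$, and the verification that $H$ is merely non-negative (not strictly positive) on positive roots, which suffices because Proposition \ref{pres} only requires non-negativity, so the perturbation fallback is unnecessary.
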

\noindent 

\subsection{Acknowledgements}
We thank the reviewer for several useful suggestions, including
the example \ref{rev}. 

\section{Filtrations of branching algebras}

In this section we will review the construction of filtrations of branching algebras
introduced in \cite{M}.  The basic object we will be working with is the algebra
$R(G) = \C[G]^{U_G} = \bigoplus_{\lambda \in C_G} V(\lambda^*),$ where $G$ is a
connected reductive group over $\C$, $U_G \subset G$ is a maximal unipotent subgroup, 
$\lambda$ are dominant weights, $V(\lambda)$ is the irreducible representation with 
highest weight $\lambda,$ and $C_G$ is the monoid of dominant weights.
We choose highest weight vectors for each irreducible representation $v_{\lambda} \in V(\lambda).$  Multiplication in $R(G)$ is induced by Cartan multiplication, see 
\cite{AB} for an introduction to the algebra $R(G)$.

$$
\begin{CD}
V(\alpha^*)\otimes V(\beta^*) @>C_*>> V(\alpha^* + \beta^*)\\
\end{CD}
$$

\noindent
Identify $V(\lambda^*)$ with the dual $V(\lambda)^*$ in the unique
way that makes $ev(v_{\lambda}, \hat{v}_{\lambda^*}) = 1$ where 
$ev:V(\lambda)\otimes V(\lambda)^* \to \C$ sends $v\otimes f$
to $f(v),$ and $\hat{v}_{\lambda^*}$ is the lowest weight vector
of $V(\lambda^*).$  Under this identification,
Cartan multiplication is the dual of the  map which sends
$v_{\alpha + \beta}$ to $v_{\alpha} \otimes v_{\beta}$.

$$
\begin{CD}
V(\alpha + \beta) @>C^*>> V(\alpha) \otimes V(\beta)\\
\end{CD}
$$
\noindent
Let $\phi: H \to G$ be a map of connected reductive groups over $\C,$
we define the branching algebra $\mathfrak{A}(\phi)$ of $\phi$ as follows.

\begin{equation}
\mathfrak{A}(\phi) = [R(H) \otimes R(G)]^H = \bigoplus_{(\alpha, \beta) \in C_H\times C_G} [V(\alpha^*)\otimes W(\beta^*)]^H\\
\end{equation}
\noindent
Here $H$ acts on $R(G)$ through $\phi,$ and $\phi$  maps $U_H$ to $U_G.$
Branching algebras are so-named because the dimension of their multigraded components give
the branching multiplicities for irreducible representations of $G$ as representations of $H.$
We will now rewrite the multiplication operation in $\mathfrak{A}(\phi)$ with respect to the following identity. 

\begin{equation}
[V(\alpha^*)\otimes W(\beta^*)]^H = Hom_H(\C, V(\alpha^*)\otimes W(\beta^*)) \cong Hom_H(V(\alpha), W(\beta^*))\\
\end{equation}
\noindent
The isomorphism on the right is given by the following construction, for $f \in Hom_H(\C, V(\alpha^*)\otimes W(\beta^*)) $.

$$
\begin{CD}
V(\alpha) = V(\alpha)\otimes \C @>id \otimes f>> V(\alpha)\otimes V(\alpha^*)\otimes W(\beta^*) @>ev \otimes id>> W(\beta^*)\\
\end{CD}
$$
\noindent
Let $\hat{f} = (ev \otimes id) \circ (id \otimes f)$ denote the transformed map. Under this isomorphism, the multiplication map
$$
\begin{CD}
\C \otimes \C @>f \otimes g>> [V(\alpha_1^*)\otimes W(\beta_1^*)] \otimes [V(\alpha_2^*)\otimes W(\beta_2^*)] @>C_* \otimes C_*>> V(\alpha_1^* + \alpha_2^*) \otimes W(\beta_1^* + \beta_2^*)\\
\end{CD}
$$
\noindent
becomes
$$
\begin{CD}
V(\alpha_1 + \alpha_2) @>C^*>> V(\alpha_1)\otimes V(\alpha_2) @>\hat{f} \otimes \hat{g}>> W(\beta_1^*)\otimes W(\beta_2^*) @>C_*>> W(\beta_1^* + \beta_2^*)\\
\end{CD}
$$
\noindent
this is a straightforward calculation. Now we consider a factorization of $\phi$ in the category of connected reductive groups over $\C.$
$$
\begin{CD}
H @>\psi>> K @>\pi>> G\\
\end{CD}
$$
\noindent
We formally get a direct sum decomposition of each multigraded component of the branching algebra $\mathfrak{A}(\phi).$

\begin{equation}
Hom_H(V(\alpha), W(\beta^*)) = \bigoplus_{\eta \in C_K} Hom_H(V(\alpha), Y(\eta^*)) \otimes Hom_K(Y(\eta^*), W(\beta^*))\\
\end{equation}
\noindent
This introduces a host of combinatorial representation theory data into the algebra
$\mathfrak{A}(\phi).$  We will see how to multiply two elements, we start 
by taking the tensor product.
$$
\begin{CD}
V(\alpha_1) \otimes V(\alpha_2) @>f_1 \otimes f_2>> Y(\eta_1^*) \otimes  Y(\eta_2^*) @>g_1\otimes g_2>> W(\beta_1^*) \otimes W(\beta_2^*)\\
\end{CD}
$$
\noindent
The middle representation decomposes as a direct sum of $K$ representations, 

\begin{equation}
Y(\eta_1^*)\otimes Y(\eta_2^*) = \bigoplus_{\eta \in C_K} Hom_K(Y(\eta^*), Y(\eta_1^*)\otimes Y(\eta_2^*)) \otimes Y(\eta^*)\\
\end{equation}
\noindent
this allows us to represent $f_1 \otimes f_2$ and $g_1 \otimes g_2$ as
sums of maps.  Let $\pi_{\eta}: Y(\eta_1^*)\otimes Y(\eta_2^*) \to Hom_K(Y(\eta^*), Y(\eta_1^*)\otimes Y(\eta_2^*)) \otimes Y(\eta^*)$ and $\pi^{\eta}: Hom_K(Y(\eta^*), Y(\eta_1^*)\otimes Y(\eta_2^*)) \otimes Y(\eta^*) \to Y(\eta_1^*)\otimes Y(\eta_2^*)$
be projections and injections that define the direct sum decomposition with $\pi_{\eta_1 + \eta_2} = C_*$ and $\pi^{\eta_1 + \eta_2} = C^*.$  Then we have

\begin{equation}
f_1 \otimes f_2 = \sum \pi_{\eta} \circ (f_1 \otimes f_2), \\
\end{equation}
\begin{equation}
g_1 \otimes g_2 = \sum (g_1 \otimes g_2) \circ \pi^{\eta}\\
\end{equation}

\noindent
Decomposing the diagram along these sums gives an expansion of the product
into components from the direct sum decomposition of $Hom_H(V(\alpha_1 + \alpha_2), W(\beta_1 + \beta_2))$, and there is a natural leading term given by the sum of the weights,
$$
\begin{CD}
V(\alpha_1 + \alpha_2) @>C_*\circ(f_1 \otimes f_2) \circ C^*>> Y(\eta_1^* + \eta_2^*) @> C_* \circ (g_1 \otimes g_2) \circ C^*>> W(\beta_1^* + \beta_2^*).\\
\end{CD}
$$
\noindent
A general term,
$$
\begin{CD}
V(\alpha_1 + \alpha_2) @> \pi_{\eta} \circ(f_1 \otimes f_2) \circ C^*>> Y(\eta^*) @> C_* \circ (g_1 \otimes g_2) \circ \pi^{\eta}>> W(\beta_1^* + \beta_2^*).\\
\end{CD}
$$
\noindent
is a member of the $(\alpha_1 + \alpha_2, \eta, \beta_1 + \beta_2)$ summand.
The leading term never vanishes, because the defining maps
$C_*\circ (f_1 \otimes f_2) \circ C^*$ and $C_* \circ (g_1 \otimes g_2) \circ C^*$
are the same as the multiplication operation in $\mathfrak{A}(\psi)$ and $\mathfrak{A}(\pi)$ respectively. These algebras are domains because $R(G)$ is always a domain.  Notice that this analysis depends only on multigraded summands of $\mathfrak{A}(\phi),$ so 
the same term decomposition exists for any subalgebra which preserves the multigrading.  We summarize the previous discussion.

\begin{proposition}\label{highest}
For any factorization of a map of connected, reductive groups over $\C,$
$$
\begin{CD}
H @>\psi>> K @>\pi>> G
\end{CD}
$$
there is a direct sum decomposition of $\mathfrak{A}(\phi)$ into
summands $\mathcal{W}(\alpha, \eta, \beta),$ with $\alpha \in C_H,$
$\eta \in C_K$ and $\beta \in C_G$ dominant weights.  This defines a 
 multifiltration of the branching algebra $\mathfrak{A}(\pi \circ \psi).$
The product of two elements
$$
\begin{CD}
V(\alpha_1) @>f_1>> Y(\eta_1) @>g_1>> W(\beta_1)\\
V(\alpha_2) @>f_2>> Y(\eta_2) @>g_2>> W(\beta_2)\\
\end{CD}
$$
has leading term
$$
\begin{CD}
V(\alpha_1+\alpha_2) @>C_* \circ f_1\otimes f_2\circ C^*>> Y(\eta_1 + \eta_2) @>C_* \circ g_1 \otimes g_2 \circ C^*>> W(\beta_1 + \beta_2).\\
\end{CD}
$$
all lower terms involve $\eta \in C_K$ which are less than $\eta_1 + \eta_2$ as dominant 
weights.
\end{proposition}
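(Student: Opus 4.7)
The plan is to reduce the statement to the Hom-space rephrasing $[V(\alpha^*)\otimes W(\beta^*)]^H\cong \mathrm{Hom}_H(V(\alpha),W(\beta^*))$ already established in the excerpt, and then exploit complete reducibility of $K$-representations to factor each such map through the isotypic components of $K$. Concretely, for each $\eta\in C_K$, the adjoint formula $\mathrm{Hom}_H(V(\alpha),W(\beta^*))=\bigoplus_{\eta\in C_K}\mathrm{Hom}_H(V(\alpha),Y(\eta^*))\otimes \mathrm{Hom}_K(Y(\eta^*),W(\beta^*))$ is immediate: given $f:V(\alpha)\to W(\beta^*)$ as an $H$-map, project $f$ onto each $K$-isotypic piece of $W(\beta^*)$, identify that piece with $\mathrm{Hom}_K(Y(\eta^*),W(\beta^*))\otimes Y(\eta^*)$, and read off the corresponding $H$-equivariant map into $Y(\eta^*)$. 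I take $\mathcal{W}(\alpha,\eta,\beta)$ to be the $\eta$-summand. This is manifestly a multigrading on the underlying vector space of $\mathfrak{A}(\phi)$, refining the $C_H\times C_G$-grading by $C_K$.

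Next I would analyze the multiplication, following the computation already partially worked out before the proposition. Given $(f_1,g_1)$ and $(f_2,g_2)$ in summands $\mathcal{W}(\alpha_i,\eta_i,\beta_i)$, the product in $\mathfrak{A}(\phi)$ is computed by taking tensor products, pre-composing with the Cartan comultiplication $C^*:V(\alpha_1+\alpha_2)\to V(\alpha_1)\otimes V(\alpha_2)$ and post-composing with the Cartan multiplication $C_*:W(\beta_1^*)\otimes W(\beta_2^*)\to W(\beta_1^*+\beta_2^*)$. To express this product in the refined decomposition, insert the resolution of the identity on $Y(\eta_1^*)\otimes Y(\eta_2^*)$ coming from its $K$-isotypic decomposition $\bigoplus_\eta\mathrm{Hom}_K(Y(\eta^*),Y(\eta_1^*)\otimes Y(\eta_2^*))\otimes Y(\eta^*)$, with projections $\pi_\eta$ and inclusions $\pi^\eta$ as set up in the excerpt. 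Each $\eta$ then contributes a summand of the product lying in $\mathcal{W}(\alpha_1+\alpha_2,\eta,\beta_1+\beta_2)$.

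Finally I need two things. First, for the filtration claim, I have to check that the only $\eta$ which can appear are those with $\eta\leq \eta_1+\eta_2$ in the dominance order. This is the standard Cartan bound on weights in a tensor product of irreducibles: every $K$-irreducible $Y(\eta^*)$ occurring in $Y(\eta_1^*)\otimes Y(\eta_2^*)$ has highest weight $\leq\eta_1+\eta_2$, and the projector $\pi^\eta$ is nonzero only for such $\eta$. Second, for the leading term, I need the $\eta=\eta_1+\eta_2$ component to be nonzero; here the projector $\pi^{\eta_1+\eta_2}$ and $\pi_{\eta_1+\eta_2}$ coincide with the Cartan maps $C^*$ and $C_*$ for $K$, so the $\eta_1+\eta_2$ part of the product is exactly the image under the multiplication maps in the two subsidiary branching algebras $\mathfrak{A}(\psi)$ and $\mathfrak{A}(\pi)$. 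The main obstacle is this non-vanishing: I expect to handle it by noting that $\mathfrak{A}(\psi)$ and $\mathfrak{A}(\pi)$ are integral domains, since each is a subalgebra of $R(H)\otimes R(G)$ and $R(H)$, $R(G)$ are domains (they are rings of functions on the irreducible affine varieties $H/U_H$, $G/U_G$). Thus the composite leading term cannot vanish when both factors are nonzero, and combining (a) with the order bound in (b) establishes the multifiltration and the statement about leading terms.
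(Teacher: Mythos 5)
Your proposal is correct and follows essentially the same route as the paper: the Hom-space reformulation, the $K$-isotypic decomposition giving the summands $\mathcal{W}(\alpha,\eta,\beta)$, the identification of the $\eta_1+\eta_2$ projections with the Cartan maps, the dominance-order bound on the lower terms, and non-vanishing of the leading term via the fact that $\mathfrak{A}(\psi)$ and $\mathfrak{A}(\pi)$ are integral domains. The only cosmetic slip is that these are subalgebras of $R(H)\otimes R(K)$ and $R(K)\otimes R(G)$ respectively, not of $R(H)\otimes R(G)$, but the domain argument is unaffected.
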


We can perform this same construction on a factorization of any length 
$$
\begin{CD}
H @>\psi>> K_1 @>\pi_1>> \ldots @>\pi_{k-1}>> K_k @>\pi>> G\\
\end{CD}
$$
\noindent
without altering the details, and proposition \ref{highest}
holds for the resulting multifiltration.  We may use
this extra combinatorial data to describe filtrations of $\mathfrak{A}(\phi).$
To each new summand $\mathcal{W}(\alpha, \vec{\lambda}, \beta)$  in the filtration, we attach a number as follows, pick functionals 

\begin{equation}
h_0: X(T_H) \to \mathbb{R}_+,\\
h_1: X(T_{K_1}) \to \mathbb{R}_+,\\
\dots,\\
h_k: X(T_{K_k}) \to \mathbb{R}_+,\\
h_{k+1}: X(T_G) \to \mathbb{R}_+,\\
\end{equation}
\noindent
such that $h_i$ has non-negative value on all positive roots of $K_i$.
Now apply these functionals to the weights defining the multifiltered summands, this defines
a filtration. 
$$
\begin{CD}
\vec{h}(V(\lambda_0) @>f_1>> V(\lambda_1) @>f_2>> \ldots @>f_k>> V(\lambda_k)) = \sum h_i(\lambda_i)\\
\end{CD}
$$
\noindent
 By proposition \ref{highest}, the value on a product of elements, computed by summing
up the contributions from each element, is always equal to the value on its leading term for any linear functional $\vec{h}.$ 

\begin{proposition}\label{pres}
Let $\Phi:\C[X] \to \mathfrak{A}(\phi)$ be a presentation of the branching algebra,
and let $\phi = \psi_1 \circ \ldots \psi_k$ be a factorization of $\phi.$
Suppose each $x \in X$ is mapped to an element of one of the summands $\mathcal{W}(\vec{\lambda}) \subset \mathfrak{A}(\phi)$ defined by the factorization, and let $I \subset \C[X]$ be the defining ideal.  Then any functional $\vec{h}$ defines a term weighting of $X$ which gives a monomial free initial ideal $in_{\vec{h}}(I)$.
\end{proposition}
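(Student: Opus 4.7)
The plan is to show that the associated graded algebra $\gr_{\vec{h}}(\mathfrak{A}(\phi))$ with respect to the $\vec{h}$-induced filtration is an integral domain; monomial-freeness of $in_{\vec{h}}(I)$ will then follow from a primality argument. Because each generator $x \in X$ is mapped by $\Phi$ into a single pure summand $\mathcal{W}(\vec{\lambda}_x)$, there is a well-defined $\vec{h}$-weight on each variable, and the resulting weight filtration on $\C[X]$ matches the $\vec{h}$-filtration on $\mathfrak{A}(\phi)$ under $\Phi$. Passing to associated graded gives an isomorphism $\C[X]/in_{\vec{h}}(I) \cong \gr_{\vec{h}}(\mathfrak{A}(\phi))$. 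Once the right-hand side is known to be a domain, $in_{\vec{h}}(I)$ is prime; and since the image of each $x$ in $\gr_{\vec{h}}$ is a nonzero homogeneous element of $\mathcal{W}(\vec{\lambda}_x)$, no $x$ lies in $in_{\vec{h}}(I)$, so primality forbids any monomial from lying there either.

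To show $\gr_{\vec{h}}(\mathfrak{A}(\phi))$ is a domain, I would use Proposition \ref{highest} as the key input. It provides that the product of a nonzero element of $\mathcal{W}(\alpha_1, \vec{\eta}_1, \beta_1)$ and a nonzero element of $\mathcal{W}(\alpha_2, \vec{\eta}_2, \beta_2)$ has a distinguished leading term in $\mathcal{W}(\alpha_1+\alpha_2, \vec{\eta}_1+\vec{\eta}_2, \beta_1+\beta_2)$, nonvanishing because the leading-term map factors through Cartan multiplication in the domain algebras $\mathfrak{A}(\psi_i)$, with all remaining terms supported at strictly smaller $\vec{\eta}$ in the dominant-weight order inside the same $(\alpha, \beta)$-bidegree. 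Since each $h_i$ is strictly positive on positive roots, such lower terms have strictly smaller $\vec{h}$-weight, so within a single $(\alpha, \beta)$-bidegree the leading term is faithfully extracted by $\vec{h}$. To rule out cancellations coming from summands at distinct $(\alpha, \beta)$-bidegrees but the same $\vec{h}$-value, I would refine $\vec{h}$ to a total ordering $\prec$ on the multi-weight monoid $C_H \times C_{K_1} \times \ldots \times C_G$ compatible with addition; then each graded piece of $\gr_{\prec}$ equals a single $\mathcal{W}(\vec{\lambda})$, the leading-term multiplication is nonzero on pure pieces by Proposition \ref{highest}, and the standard leading-term argument shows $\gr_{\prec}$ is a domain. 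Since $\gr_{\prec}$ is an associated graded of $\gr_{\vec{h}}$ for a further filtration, and an algebra whose associated graded is a domain is itself a domain, $\gr_{\vec{h}}$ is a domain.

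The main obstacle is this refinement step: Proposition \ref{highest} directly gives non-vanishing only for leading terms of products of individual pure-summand elements, whereas $\gr_{\vec{h}}$ may collapse several summands sharing an $\vec{h}$-value into one graded piece. Passing to a total-order refinement separates these summands, which lets the non-vanishing statement of Proposition \ref{highest} propagate to a genuine domain statement for $\gr_{\vec{h}}$, after which the primality-monomial argument runs cleanly.
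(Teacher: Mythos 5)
Your strategy is genuinely different from the paper's: you aim for the stronger conclusion that $in_{\vec{h}}(I)$ is \emph{prime}, by identifying $\C[X]/in_{\vec{h}}(I)$ with $\gr_{\vec{h}}(\mathfrak{A}(\phi))$ and proving the latter is a domain, whereas the paper proves only monomial-freeness, directly: for $F = \sum c_a\vec{x}^{\vec{a}} \in I$, the leading pure term $S_0^{\vec{a}}$ of a monomial of maximal $\vec{h}$-weight must be cancelled by a pure term of some other monomial, and the partial order on pure terms forces that other monomial to have the same weight. Your argument that the associated graded of the representation-theoretic filtration is a domain (refine $\vec{h}$ to an additive total order compatible with the dominant order, use the nonvanishing of leading terms from Proposition \ref{highest}, descend) is sound. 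The genuine gap is your first step: the asserted isomorphism $\C[X]/in_{\vec{h}}(I) \cong \gr_{\vec{h}}(\mathfrak{A}(\phi))$. What holds automatically is $\C[X]/in_{\vec{h}}(I) \cong \gr(\mathfrak{A}(\phi))$ for the \emph{quotient} filtration, whose level-$d$ piece is the span of images of monomials of weight at most $d$. Identifying this with the filtration by $\bigoplus_{\vec{h}(\vec{\lambda}) \leq d} \mathcal{W}(\vec{\lambda})$ is exactly the claim that the images of the variables form a SAGBI/Khovanskii-type basis for the representation-theoretic filtration; Proposition \ref{highest} gives only one inclusion (a weight-$d$ monomial lands in level $d$ with nonzero leading term), not the reverse. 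When leading pure terms of several monomials cancel, lower pure terms are exposed that need not lie in the span of images of lower-weight monomials, and in that situation an initial ideal can acquire monomials even though the intrinsic associated graded algebra is a domain. So the domain statement you prove does not, without further argument, control the ideal $in_{\vec{h}}(I)$ of the given presentation.

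The repair is to weaken the goal: use only the algebra homomorphism $\Psi: \C[X] \to \gr_{\vec{h}}(\mathfrak{A}(\phi))$ sending each $x$ to the class of $\Phi(x)$. For $F \in I$ the weight-$d$ components of $\Phi(F)$ cancel for every $d$, so $\Psi$ kills $in_{\vec{h}}(F)$ and hence spans of such, i.e.\ all of $in_{\vec{h}}(I)$; but $\Psi(\vec{x}^{\vec{a}})$ is the class of the nonvanishing leading term of $\Phi(\vec{x}^{\vec{a}})$, so no monomial lies in $\ker\Psi$. This needs neither the filtration matching nor primality, and is essentially the paper's cancellation argument in homomorphism form. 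Two smaller points: the hypotheses only require each $h_i$ to be non-negative on positive roots (the paper explicitly allows $h_i = 0$), so the phrase ``strictly positive'' misstates the setup, although your refinement to a total order already handles ties; and one should note that no variable maps to $0$, since otherwise that variable itself would be a monomial in $in_{\vec{h}}(I)$.
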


\begin{proof}
Pick any expression in the ideal $I$.

\begin{equation}
F(X) = \sum c_a\vec{x}^{\vec{a}}\\
\end{equation}
\noindent
 We consider the expansion of each monomial term into
pure terms, $\Phi\circ (\vec{x}^{\vec{a}}) = S_0^a + \ldots + S_m^a,$ where $S_0^a$ has
the same pure filtration level as the monomial, the existence of this term
follows from proposition \ref{highest}, which also implies that we must have $\vec{h}(S_0) \geq \vec{h}(S_i)$ for every term in this expansion.  In general, for pure terms $X$ and $Y$, we say that $X \geq Y$ if for each component $\lambda_i(X) - \lambda_i(Y)$ is a positive  root. Note that not all pure terms are comparable. By definition of the functional $\vec{h}$ if $X \leq Y$ then $\vec{h}(X) \leq \vec{h}(Y).$  
Now suppose some monomial $\vec{x}^{\vec{a}}$ in the expression $F(X)$ 
has the highest filtration weight with respect to $\vec{h}.$ 
We must have $\Phi\circ F(X) = 0,$ so $S_0^{\vec{a}}$ must
be canceled by pure terms from the expansion of other monomials.
This implies that some monomial $\vec{x}^{\vec{b}}$ must have a pure term
$S_j^{\vec{b}}$ with the same multifiltration level as $S_0^{\vec{a}}.$
We must have that $S_0^{\vec{a}} \leq S_0^{\vec{b}}$ as pure terms, by assumption this implies that $\vec{x}^{\vec{b}}$ has the same filtration weight as $\vec{x}^{\vec{a}}$. 
\end{proof}

This proposition implies that every $\vec{h}$ defines a point on the tropical
variety of the defining ideal $I.$  It also implies that for any presentation
$\Phi: \C[X] \to \mathfrak{A}(\phi),$ and any form in the defining ideal $F(X) \in I,$
the leading terms of at least two monomials agree, a result independent of a
functional $\vec{h}.$   The functionals $\vec{h}$ fit into the broader theory of 
valuations on rings.  Roughly these are functions $v$ on a ring which satisfy
$v(ab) = v(a) + v(b),$  $v(a + b) \leq max\{v(a), v(b)\},$ and $v(0)$ is $0$
or $-\infty$ depending on the tropical algebra where $v$ takes its values. 
Generally speaking valuations define "universal" tropical points, in that they 
define a point on the tropical variety of any presentation of a subring of the
ring on which they are defined.  We explore these objects in the note
\cite{M2}, see also \cite{P}.  For each factorization of $\phi: H \to G$

\begin{equation}
F = \{\psi_1, \ldots, \psi_k\},  \\
\end{equation}
\begin{equation}
\phi = \psi_1 \circ \ldots \circ \psi_k\\
\end{equation}

\noindent
we obtain a cone of functionals $\vec{h} \in P_F$ defined by the conditions
on the components of $\vec{h}.$ Note that the $h_i = 0$ is always an option, indeed this essentially forgets the information in $i$-th component of the multifiltration.  For each
factorization $F = \{\psi_1, \ldots, \psi_k\}$ and every $i,$ there is an operation

\begin{equation}
O_i(F) = \{\psi_1 \ldots, \psi_{i-1}, \psi_{i+1}\circ \psi_i, \psi_{i+2}, \ldots, \psi_k\}\
\end{equation}

\noindent
Setting $h_i$ to $0$ gives a map of cones $P_{O_i(F)} \to P_F$ which defines $P_{O_i(F)}$ as a face of $P_F.$  This defines a connected complex of cones $\bigcup_{F \circ \phi} P_F$ over all factorizations of $\phi$ in the category of connected, reductive groups.  The content of the proposition above is that there is a map from this complex into the tropical variety of any presentation of $\mathfrak{A}(\phi),$ the same holds for any subalgebra of $B \to \mathfrak{A}(\phi)$ which preserves the multigrading.  In particular, this is true for the subalgebra of invariants, which will be important in the sequel.  

\begin{equation}
R(G)^H = [\C\otimes R(G)]^H \subset [R(H)\otimes R(G)]^H = \mathfrak{A}(\phi),\\ 
\end{equation}

\begin{example}\label{rev}
We can also look at branching deformations for the trivial subgroup of a reductive group
$1 \to G.$  This morphism is factored by any flag of subgroups of $G,$ for instance
we can take $G = GL_n$ and look at the flag

\begin{equation}
1 \to GL_1 \to GL_2 \to \ldots \to GL_{n-1} \to GL_n.\\
\end{equation}

\noindent
The branching of $GL_m$ over $GL_{m-1}$ is multiplicity free, so the branching algebra
associated to this pair is toric.  Choosing a functional $h_i: \Delta_{GL_i} \to \R_+$
which is positive on positive roots then defines a toric deformation of $R(GL_n)$ to the monoid
of Gel'fand Tsetlin patterns.  
\end{example}

\begin{example}
Any representation $V$ of a reductive group $G$ defines a morphism $G \to GL_n$
for $n = dim(V).$  First we note that if $V$ is reducible, then the map factors through
$GL_{n_1} \times \ldots \times GL_{n_k} \to GL_n$  for some partition of $n.$  Also, 
the map defined by $V$ always defines a factorization of the trivial morphism 
$1 \to G \to GL_n,$ and can therefore be identified with a cone of filtrations on $R(Gl_n).$
\end{example}

\begin{remark}
One can use this technique to define degenerations of a wide range of varieties with symmetry. Let $A$ be a commutative ring with the action of a product of reductive groups $H \times G,$ and $\phi:H \to G$ be a map of connected reductive groups over $\C$.   There is a flat degeneration of $A$ defined in \cite{Gr} which preserves the action of $H \times G,$ to the algebra $[A^{U_H^- \times U_G^- }\otimes R(H)\otimes R(G)]^{T_H \times T_G}$, where $T_G$ is a maximal torus of $G.$ Taking $H$ invariants for the action of $(id, \phi):H \to H\times G$ gives a flat degeneration $A^H \rightarrow [A^{U_G} \otimes \mathfrak{A}(\phi)]^{T_G},$  this can then be composed with degenerations of the branching algebra.  This technique was adapted by the author in \cite{M} to study properties of a quantum analogue of a branching algebra coming from conformal field theory.  A similar sort of universality holds
for other types of degenerations defined from the combinatorics of representation theory, for instance toric degenerations of spherical varieties, see \cite{AB} for details.  
\end{remark}

\begin{remark}
In \cite{M} the author also studied the associated graded algebra of a
branching filtration.  For a factorization,
$$
\begin{CD}
H @>\psi>> K @>\phi>> G\\
\end{CD}
$$
\noindent
If the functional $\hat{h}$  is strictly positive on the 
positive roots of $K,$ then we get a flat deformation
over $\C[t],$

\begin{equation}
\mathfrak{A}(\phi \circ \psi) \Rightarrow [\mathfrak{A}(\psi)\otimes \mathfrak{A}(\phi)]^{T_K}\\
\end{equation}
\end{remark}

\section{Diagonal branching algebras for $SL_m(\C)$}

In this section we use the results from the previous section
to study $\C[M_{m\times n}(\C)]^{SL_m(\C)} \subset \mathfrak{A}(\Delta_n),$
where $\Delta_n: SL_m(\C) \to SL_m(\C)^n$ is the diagonal embedding. These
embeddings have a special class of filtrations classified by rooted trees with $n$ leaves. Take such a tree $\hat{\tree}$ and define a factorization of $\Delta_n$ as follows,
let $\hat{\tree}$ have the orientation induced by the root, as before. 
For each internal vertex $v \in \hat{\tree}$ attach the diagonal morphism
$\Delta_{val(v)-1}$ from one copy of $SL_m(\C)$ to $SL_m(\C)^{val{v} -1}.$

\begin{figure}[htbp]
\centering
\includegraphics[scale = 0.3]{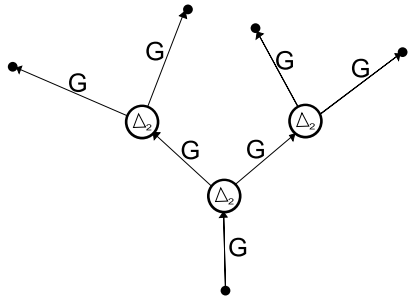}
\caption{Factorization of $\Delta_4$}
\label{fig:4}
\end{figure}

By well-ordering the non-leaf vertices of $\hat{\tree}$ in any way such that 
the first vertex is attached to the root, and two consecutive vertices share an edge
allows us to write this factorization in the style of the previous section.
$$
\begin{CD}
SL_m(\C) @>\Delta_{val(v_1)-1}>> SL_m(\C)^{val{v_1}-1} @>Id^{val(v_1)-2}\times \Delta_{val(v_2)-1}>> SL_m(\C)^{val(v_1) + val(v_2) - 3} \ldots SL_m(\C)^n\\
\end{CD}
$$
\noindent
This results in a direct sum decomposition of $\mathfrak{A}(\Delta_n)$ into 
spaces $\mathcal{W}(\hat{\tree}, \lambda)$  indexed by assignments of dominant weights of $SL_m(\C)$ to the edges of $\hat{\tree}$, along with an assignment of $SL_m(\C)-$linear maps at every vertex intertwining the corresponding tensor products of irreducible representations. From the introduction we know that the Pl\"ucker algebra is the subalgebra of $\mathfrak{A}(\Delta_n)$ generated by the unique invariants $[\C \otimes \ldots V(\omega_1^*) \otimes..\C]^{SL_m(\C)}$ where $m$ of the $n+1$ pieces of the tensor product are copies of $V(\omega_1^*) = \bigwedge^{m-1}(\C^m).$  The first piece, corresponding to the root, is always $\C,$ and the other $n-m$ pieces are the trivial representation $\C.$  Each of these spaces
is one dimensional, so we should be able to write down the
tree diagram of a basis member for a chosen $\hat{\tree}.$ To describe the diagram in general
it is simplest to start with a rooted tree $\hat{\tree}_o$ with $m$ leaves, give this tree an orientation as above.  Each leaf of this tree is labeled with $\omega_1,$ and to compute the representation labeling a given edge $e \in \hat{\tree}_o,$ simply count the number
of leaves $n_e$ above $e$ with respect to the rooted orientation, and give it the label $\omega_{n_e} = \bigwedge^{n_e}(\C^m).$ Now dualize the whole picture, so $\omega_i$
becomes $\omega_{m-i}$.  The result is shown below in Young tableaux.

\begin{figure}[htbp]
\centering
\includegraphics[scale = 0.4]{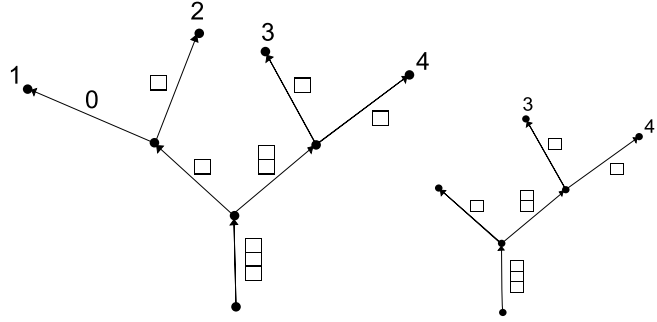}
\caption{A tree weighted with representations}
\label{fig:5}
\end{figure}

The root is labeled with $\bigwedge^m(\C^m)$ which is trivial as an $SL_m(\C)$ representation. In a general rooted tree $\hat{\tree},$ take the convex hull of the root and the non-trivially labeled leaves.  Combinatorially, this is the same as some rooted tree with $m$-leaves $\hat{\tree}_o,$ label the edges of $\hat{\tree}$ accordingly, and label all other edges with the trivial representation.  Note that up to scalars the available intertwiners $\bigwedge^{m- (i+j)}(\C^m) \to \bigwedge^{m-i}(\C^m) \otimes \bigwedge^{m-j}(\C^m)$ in this diagram are all unique as expected.  The subalgebra $\C[M_{m \times n}(\C)]^{SL_m(\C)} \subset \mathfrak{A}(\Delta_n)$ is generated by the $\binom{n}{m}$ elements of this type.  All diagrams are given explicitly in terms of the $m-1$ fundamental weights of $SL_m(\C),$ and one easily checks that an edge $e \in \hat{\tree}$ is labeled nontrivially if and only if it is in the combinatorial convex hull of the $m$-nontrivially labeled leaves. Now consider the functional $H: X(T_{SL_m(\C)}) \to \mathbb{R}_+$ defined by $H(\omega_k) = 1$ for all fundamental weights $\omega_k,$ and note that this functional gives the trivial representation the $0$ weight. Pick a non-negative length $d_e$ for edge $e \in \hat{\tree},$ and consider the functional defined by assigning $d_e H$ to the edge $e.$  For the Pl\"ucker coordinate $Z_{i_1 \ldots i_m}$ we have

\begin{equation}
(\hat{\tree}, \vec{d}H)\circ(Z_{i_1 \ldots i_m}) = \sum_{e \in conv_{\hat{\tree}}\{i_1, \ldots, i_m\}} d_e\\ 
\end{equation}

\begin{proposition}
For any metric tree $(\tree, \ell)$ with $n$ leaves there is a rooted tree
with $n$ leaves $\hat{\tree}$, and a functional $\hat{h}$ with

\begin{equation}
(\hat{\tree},\hat{h})\circ(Z_{i_1 \ldots i_m}) = d_{i_1 \ldots i_m}(\tree, \ell)
\end{equation}

\end{proposition}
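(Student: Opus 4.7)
The plan is to take $\hat{\tree}$ to be $\tree$ itself with a single new ``root edge'' attached at an interior vertex, and to use a functional of the form $\vec{d}\,H$, where $H$ is the fundamental functional introduced just above the statement with $H(\omega_k) = 1$ on every fundamental weight. Concretely, pick any internal vertex $v_0 \in \tree$ (first subdividing an edge of $\tree$ if none exists, i.e.\ in the case $n = 2$), adjoin a new root vertex joined to $v_0$ by a single new edge $e_r$, and label the leaves of the resulting rooted tree $\hat{\tree}$ identically to those of $\tree$. Then set $d_e = \ell(e)$ for every $e \in E(\tree) \subset E(\hat{\tree})$ and $d_{e_r} = 0$.

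To verify the identity I would unpack the explicit description of the Pl\"ucker coordinate $Z_{i_1 \ldots i_m}$ in the tree-decomposition of Proposition~\ref{decomp} that was worked out in the paragraphs preceding the statement. For a fixed $m$-subset $S = \{i_1, \ldots, i_m\}$ of leaves, each edge $e$ of $\hat{\tree}$ carries the label $\omega_{m - n_e}$, where $n_e$ is the number of leaves of $S$ lying above $e$ with respect to the root. Since $H$ vanishes on the trivial weight and equals $1$ on each $\omega_k$ with $1 \leq k \leq m-1$, the contribution of $e$ to $(\hat{\tree}, \vec{d}\,H)(Z_S)$ is $d_e$ exactly when $0 < n_e < m$, and zero otherwise.

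The combinatorial heart of the matter is then the observation that for $e \in E(\tree)$ the removal of $e$ partitions the leaves of $\tree$ into two parts containing $n_e$ and $m - n_e$ elements of $S$, so $0 < n_e < m$ is precisely the condition that $S$ meets both sides of $e$, i.e.\ that $e$ lies on the path between two leaves of $S$, i.e.\ that $e \in \gamma(i_1, \ldots, i_m)$. This criterion is manifestly symmetric in the two sides of $e$ and so is independent of where the root was attached. The new root edge $e_r$ always has $n_{e_r} = m$ (all of $S$ lies on the $\tree$-side), so it is trivially labelled and its weight $d_{e_r} = 0$ contributes nothing. Assembling these observations gives
\begin{equation*}
(\hat{\tree}, \vec{d}\,H)(Z_{i_1 \ldots i_m}) \;=\; \sum_{e \in \gamma(i_1, \ldots, i_m)} d_e \;=\; \sum_{e \in \gamma(i_1, \ldots, i_m)} \ell(e) \;=\; d_{i_1 \ldots i_m}(\tree, \ell).
\end{equation*}

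The only step requiring genuine care is the combinatorial identification of the nontrivially-labelled edges with the unrooted convex hull; once this is in hand no representation-theoretic obstruction arises, since the spaces $\mathcal{W}(\hat{\tree}, \lambda)$ realizing $Z_S$ are one-dimensional by the uniqueness up to scalar of the intertwiners $\bigwedge^{a+b}(\C^m) \to \bigwedge^a(\C^m) \otimes \bigwedge^b(\C^m)$ populating the internal vertices of the diagram.
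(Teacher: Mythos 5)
Your proof is correct and follows essentially the same route as the paper: attach a root to $\tree$ without disturbing the metric, invoke the formula $(\hat{\tree}, \vec{d}H)(Z_{i_1\ldots i_m}) = \sum_{e \in conv_{\hat{\tree}}\{i_1,\ldots,i_m\}} d_e$ established just before the proposition, and check that the rooted combinatorial convex hull agrees with the unrooted one. The only cosmetic difference is that the paper inserts the root by subdividing an edge and splitting its weight, whereas you adjoin a zero-weight root edge at a vertex; your explicit verification that $e$ is nontrivially labelled iff $0 < n_e < m$ iff $e \in \gamma(i_1,\ldots,i_m)$ is exactly the step the paper calls ``simple to verify.''
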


\begin{proof}
To get $\hat{\tree},$ one may add a root to $\tree$ anywhere.  It is simple to verify that this preserves combinatorial convex hulls.  See figure \ref{fig:6} for an example. The root is added in the middle of an edge of $\tree,$ so in order to preserve the weighting information we must split the weight on this edge among the two new edges created by the addition of the root. 
 The previous discussion does the rest.
\end{proof}

\begin{figure}[htbp]
\centering
\includegraphics[scale = 0.35]{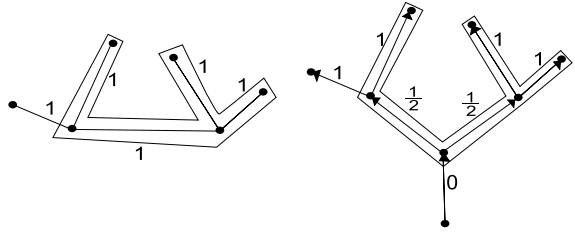}
\caption{adding a root to the tree}
\label{fig:6}
\end{figure}
\noindent
This proposition establishes that we can replicate the $m$-dissimilarity vectors
of a metric tree $(\tree, \ell)$ with branching filtrations.  The efforts of the previous
section confirm that branching filtrations always give tropical points.  Together, these
facts prove theorem \ref{main}.  

\section{Examples}

In this section we will look at dissimilarity maps, Pl\"ucker coordinates, 
, and tree weighting functionals in more detail for a specific example.
We will take a look at some elements and relations in the Pl\"ucker algebra
$\C[Gr_3(\C^8)].$  We choose a rooted tree with $8$ leaves, $\tree$. 

\begin{figure}[htbp]
\centering
\includegraphics[scale = 0.35]{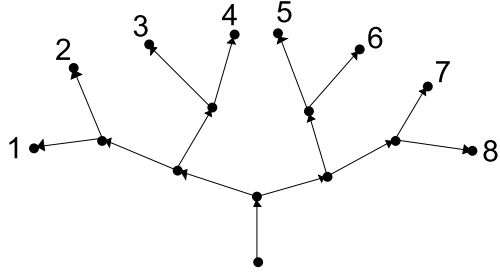}
\caption{a rooted tree with 8 leaves}
\label{fig:7}
\end{figure}

\noindent
For simplicity we give $\tree$ the metric where each edge has length $1,$
note that the corresponding unrooted tree would have one edge with length $2,$
and all others with length $1.$ We will find how $\tree$ weights the Pl\"ucker relation

\begin{equation}
Z_{123}Z_{456} - Z_{124}Z_{356} + Z_{125}Z_{346} - Z_{126}Z_{345} = 0
\end{equation}
\noindent
in $\C[M_{3\times8}(\C)]^{SL_3(\C)}.$  Each Pl\"ucker coordinate corresponds
to an assignment of representations to the edges of $\tree$,
which are then weighted with the functional $H,$ as in figure \ref{fig:9}.

\begin{figure}[htbp]
\centering
\includegraphics[scale = 0.5]{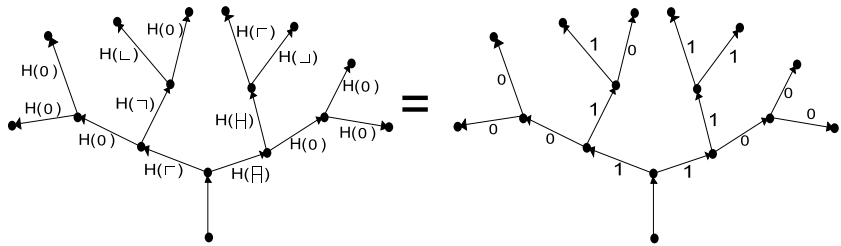}
\caption{applying functional}
\label{fig:9}
\end{figure}

\noindent
In figure \ref{fig:8} we show the convex hulls of each set of leaves, rows correspond to Pl\"ucker monomials.

\begin{figure}[htbp]
\centering
\includegraphics[scale = 0.5]{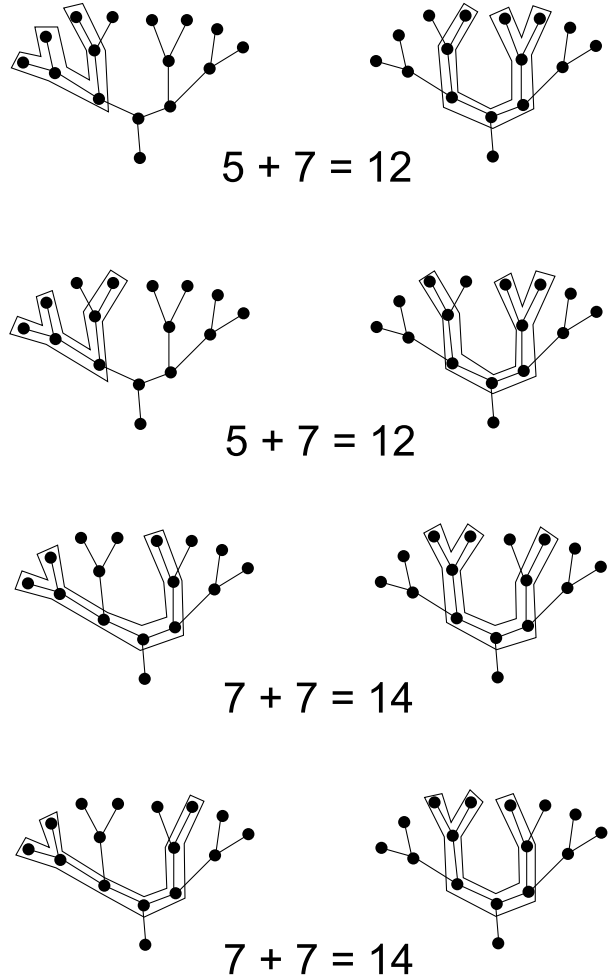}
\caption{combinatorial convex hulls of leaves}
\label{fig:8}
\end{figure}

\noindent
This results in the following weights in the Pl\"ucker relation,
\begin{equation}
t^{12}Z_{123}Z_{456} - t^{12}Z_{124}Z_{356} + t^{14}Z_{125}Z_{346} -t^{14}. Z_{126}Z_{345} = 0
\end{equation}

Next we look at the general case of $SL_2(\C).$  The $2$-dissimilarity vectors of a tree are 
the best understood dissimilarity vectors because of their association with the Grassmannian $Gr_2(\C^n),$ the same is true for $SL_2(\C^n)$ branching algebras.
The algebra $\mathfrak{A}(\Delta_n)$ for $SL_2(\C)$ is isomorphic
to $\C[M_{2\times n+1}]^{SL_2(\C)},$ indeed we have

\begin{equation}
R(SL_2(\C)) = Sym(V(\omega_1)) \cong \C[x_1, x_2]\\ 
\end{equation}
\noindent
It follows that the subalgebra of invariants $R(SL_2(\C)^n)^{SL_2(\C)}$
is isomorphic to the $(2, n)$ Pl\"ucker algebra. For a rooted tree 
$\hat{\tree},$ The functionals $(\hat{\tree}, \hat{h})$ are all given by assigning non-negative
integers to the edges of $\hat{\tree},$ as non-negative integers correspond to maps
$h_e: C_{SL_2(\C)} = \mathbb{Z}_+ \to \mathbb{R}_+.$  Therefore for any metric tree $(\tree, \ell)$ we can construct a branching algebra filtration that weights the Pl\"ucker monomials the same as $(\tree, \ell).$ In this way, every member of the tropical Grassmannian $Trop(Gr_2(\C^n))$ is realizable by a branching filtration.  

\bigskip

\bigskip

\noindent
Christopher Manon:\\
Department of Mathematics,\\ 
University of California, Berkeley,\\ 
Berkeley, CA 94720-3840 USA,\\
chris.manon@math.berkeley.edu


\begin{thebibliography}{9}\label{biblio}

\bibitem[AB]{AB}
V. \ Alexeev and M. \ Brion, \emph{Toric degenerations of spherical varieties},
Selecta Mathematica 10, no. 4, (2005), 453-478.

\bibitem[BC]{BC}
C. \ Bocci and F. \ Cools, \emph{A tropical interpretation of $m$-dissimilarity maps},
Applied Mathematics and Computation Volume 212, Issue 2, 15 June 2009, Pages 349-356.

\bibitem[C]{C}
F. \ Cools, \emph{On the relation between weighted trees and tropical Grassmannians},
Journal of Symbolic Computation Volume 44 ,  Issue 8  (August 2009), Pages: 1079-1086.  

\bibitem[D]{D}
I. \ Dolgachev, \emph{Lectures on Invariant Theory}, Longond Mathematical Society Lecture Note Series 296, Cambridge University Press, Cambridge, 2003.

\bibitem[FH]{FH}
W. \ Fulton, J. \ Harris, \emph{Representation Theory}, in: GTM, Vol. 129, Springer, Berlin, 1991.

\bibitem[G]{G}
B. Iriarte Giraldo, \emph{Dissimilarity Vectors of Trees are Contained in the Tropical
Grassmannian}, The Electronic Journal of Combinatorics 17, no 1, (2010).  

\bibitem[Gr]{Gr}
F.D. \ Grosshans,\emph{Algebraic homogeneous spaces and invariant theory}, Springer Lecture Notes, vol. 1673, Springer, Berlin, 1997.

\bibitem[M]{M}
C.\ Manon, \emph{The algebra of conformal blocks}, http://arxiv.org/abs/0910.0577

\bibitem[M2]{M2}
C. \ Manon. \emph{Graded valuations and tropical geometry}, http://arxiv.org/abs/1006.0038

\bibitem[P]{P} 
S. Payne, \emph{Analytification is the limit of all tropicalizations}, Math. Res. Lett. 16, (2009) no 3, 543-556.

\bibitem[PSt]{PSt}
L.\ Pachter, and B.\ Sturmfels, \emph{Algebraic statistics for computational biology},
Cambridge University Press, New York 2005

\bibitem[SpSt]{SpSt}
D.\ Speyer and B.\ Sturmfels, \emph{The tropical Grassmannian},
Adv. Geom. 4, no. 3, (2004), 389-411.

\end{thebibliography}
\end{document}